\documentclass[a4paper]{amsart}
\usepackage{amsmath}
\usepackage{amsfonts}
\usepackage{amssymb}
\usepackage{mathrsfs}
\usepackage[mathcal]{eucal}
\usepackage{enumerate}
\usepackage{chngcntr}
\counterwithin{equation}{section}

\theoremstyle{plain}
\newtheorem{thm}{Theorem}[section]
\newtheorem{lem}[thm]{Lemma}
\newtheorem{prop}[thm]{Proposition}
\newtheorem{cor}[thm]{Corollary}

\theoremstyle{definition}
\newtheorem{defn}{Definition}[section]

\newtheorem{exmp}{Example}[section]

\theoremstyle{remark}
\newtheorem*{rem}{Remark}

\DeclareMathOperator*{\Perm}{\mathit{Perm}}
\DeclareMathOperator*{\Aut}{\mathit{Aut}}

\begin{document}
\title[Skew two-sided bracoids]{Skew two-sided bracoids}
\author{Izabela Agata Malinowska}

\address{Institute of Mathematics, University of Bia\l{}ystok,  
 15-245 Bia\l{}ystok, Cio\l{}kowskiego 1\,M, Poland}
 
\email{izabelam@math.uwb.edu.pl}
\date{}

\keywords{Skew left braces, skew left bracoids, two-sided braces, two-sided bracoids}

\subjclass[2010]{ 16T25, 81R50} 
      
\date{}

\begin{abstract}
Isabel Martin-Lyons and Paul J.Truman generalized the definition of a skew brace to give a new algebraic object, which they termed a skew bracoid.
 Their construction involves two groups interacting in a manner analogous to the compatibility condition found in the definition of a skew brace.
 They formulated tools for characterizing and classifying skew bracoids, and studied substructures and quotients of skew bracoids. In this paper
 we study two-sided bracoids. In \cite{WR07} Rump showed that if a left brace 
$(B, \star ,\cdot )$ is  a two-sided brace and the operation $\ast : B \times B \longrightarrow  B$ is
 defined by
$a \ast b = a\cdot b \star \overline{a} \star \overline{b}$ for all $a, b \in B$ then $(B, \star ,\ast )$ is a Jacobson radical ring.  Lau showed that if 
$(B, \star ,\cdot )$ is a left brace and the operation  is asssociative, then 
$B$ is a two-sided brace. We will prove   bracoid versions of this results. 
\end{abstract}. 
\maketitle

\section{Introduction and notation}

Skew braces has been introduced by Guarnieri and Vedramin in \cite{GV17} as a generalisation of the left
braces introduced by Rump in \cite{WR07}. 

\begin{defn}\label{defn21}
A \emph{ (left) skew brace} is a triple $(B, \star , \cdot   )$ such that $(B, \cdot )$ and $(B, \star  )$ are  groups  such that
\begin{equation} \label{eq1}
a \cdot  (b \star   c) = (a \cdot  b) \star   \overline{a} \star   (a \cdot c) 
\end{equation}
for all $a,b,c\in  B$.
\end{defn}

Throughout, the identity element of a group $G$ is denoted $e_G$.
We also denote by $\overline{\eta}$ the inverse element of $\eta$ in a group $(N, \star )$ and $g^{-1}$ the inverse element of $g$ in a group $(G,\cdot )$.
Note that if $(B,\star )$ is an abelian group, we obtain the definition of left brace introduced by Rump, as formulated by Ced\'{o}, Jespers and Okni\'{n}ski in
\cite{CJO14}.
Skew left braces have been devised with the aim of attacking
the problem of finding all set-theoretical non-degenerate solutions 
of the Yang-Baxter Equation, a consistency equation that plays a relevant role
in quantum statistical mechanics, in the foundation of quantum groups, and that provides a multidisciplinary approach from a wide variety of areas such as Hopf algebras,
knot theory and braid theory among others.

In \cite{BT19}  Brzezi\'{n}ski proposed to study a set with two binary operations connected by a rule  which can be seen as the interpolation between
 the ring-type (i.e. the standard) and brace distributive laws. A \emph{skew left truss} is a set $A$ with binary operations $\star $ and $\cdot$, such that $(A, \star )$ is a group,
$ (A, \cdot )$ is a semigroup and, for all
$a, b, c \in A$, the following generalised distributive law holds
\begin{equation*} 
a \cdot  (b \star   c) = (a \cdot  b) \star   \overline{a\cdot e_\star } \star   (a \cdot c), 
\end{equation*}
where $e_\star $ is the neutral element of the group $(A, \star )$.
This truss distributive law interpolates the ring (standard) and brace distributive laws:
the former one is obtained by setting $a\cdot e_\star =  e_\star $, the
latter is obtained by setting $a\cdot e_\star = a$. A special case of  skew left truss is a near brace, considered by 
Doikou and Rybo\l owicz in \cite{DR24}. A near brace is a skew left truss  $A$, where $ (A, \cdot )$ is a group. Using the notion of the near braces 
they produced new multi-parametric,
non-degenerate, non-involutive solutions of the set-theoretic Yang-Baxter equation. These
solutions are generalisations of the known ones coming from braces and skew braces. 

In \cite{MT24}   Martin-Lyons and Truman generalised the definition of a skew brace to give a new algebraic object, which they termed 
a skew bracoid. 

\begin{defn}\label{defn22} \cite[Definition 2.1.]{MT24}
A \emph{ (left) skew bracoid} is a $5$-tuple $(G, \cdot ,N, \star   ,\odot )$ such that $(G, \cdot )$ and $(N, \star )$ are groups and $\odot $ is a transitive action of $(G, \cdot )$ on $N$ such that
\begin{equation} \label{eq2}
g \odot  (\mu \star   \eta) = (g \odot  \mu) \star   \overline{(g \odot  e_N)}  \star  (g \odot \eta) 
\end{equation}
for all $g\in G$ and $\mu,\eta \in N$. If  $(N, \star )$ is an abelian group, then we shall call $(G, \cdot ,N, \star   ,\odot )$ 
a \emph{ (left) bracoid}. 
\end{defn}

\begin{exmp}\label{e21} \cite[Example 2.2.]{MT24}
A skew brace $(B, \star , \cdot   )$ can be viewed as a skew bracoid with $(G, \cdot )=(B, \cdot )$,  $(N, \star )=(B, \star )$ and 
$\odot = \cdot $.
\end{exmp}

Clearly a near brace is also a special case of a skew bracoid. 
Martin-Lyons and Truman showed that several important constructions and identities associated with skew braces (such as $\gamma $-functions)
 have natural skew bracoid counterparts. We recall  that the
 $\gamma $-function of a skew brace $(B, \cdot , \star    )$ is the function $\gamma : B \longrightarrow \Perm(B)$ defined by 
$^{\gamma (b)}a =\overline{b} \star  (b\cdot a)$. In fact, we have $\gamma (b) \in \Aut(B, \star )$ for each $b \in B$, and the map 
$\gamma $ is a homomorphism from
 $(B, \cdot )$ to $\Aut(B, \star )$.  The $\gamma $-functions of skew braces (often called $\lambda$-functions) 
are used, among others, to define left ideals and ideals of skew braces (see for example \cite{CSV19, GV17}).  In \cite{MT24}  Martin-Lyons
and Truman showed that certain skew bracoids arise from a natural quotienting procedure on skew braces. 
They also defined $\gamma$-functions, left ideals and ideals of skew braces.

\begin{defn}\label{defn23a} {\cite[Definition 2.10.]{MT24}} Let $(G, \cdot ,N, \star   ,\odot )$ be a skew bracoid. The homomorphism 
$\gamma: G \longrightarrow \Aut(N)$ defined by 
$$^{\gamma(g)}\eta = \overline{(g \odot  e_N)} \star  (g \odot \eta)\, \text{for\, all}\, g \in G\, \text{and}\, \eta\in N$$
is called the \emph{$\gamma $-function} of the skew bracoid.
\end{defn}

As it was noticed in \cite{MT24} when a skew brace is viewed as a skew bracoid, the $\gamma$-function of the skew bracoid coincides with 
that of the skew brace.

In Section 3 we consider  two-sided bracoids. In \cite{WR07} Rump showed that if a left brace 
$(B, \star ,\cdot )$ is  a two-sided brace and the operation $\ast : B \times B \longrightarrow  B$ is
 defined by
$a \ast b = a\cdot b \star \overline{a} \star \overline{b}$ for all $a, b \in B$ then $(B, \star ,\ast )$ is a Jacobson radical ring
 (see also \cite[Proposition 1]{CGS18}). In \cite{L} Lau showed that if $(B, \star ,\cdot )$ is a left brace and the operation $\ast$  is asssociative, then 
$B$ is a two-sided brace. We will prove   bracoid versions of this results.

\section{Preliminaries}

Here are some elementary properties of skew bracoids.

\begin{rem} {\rm (see \cite[Remark 1.3.]{CSV19})}
 Let $(G, \cdot ,N, \star   ,\odot )$ be a skew left bracoid. Then the following formulas hold:
$$(g \odot  e_N) \star  ^{\gamma(g)}\eta = g \odot \eta,\; (g \odot  e_N) \star  \eta = g \odot (^{\gamma(g)^{-1}}\eta),\; ^{\gamma(g)}(g^{-1} \odot e_N) =
 \overline{ g \odot e_N}$$
 for all $g\in G$ and $\eta\in N$.
\end{rem}

\begin{lem}\label{lem23} {\cite[Proposition 2.13.]{MT24}} Let $(G, \cdot ,N, \star   ,\odot )$ be a skew bracoid. Then for all $g\in G$ and $\eta\in N$ we have
$$\overline{(g \odot  e_N)} \star (g \odot \overline{ \eta}) \star \overline{ (g \odot  e_N)} = \overline{g \odot  \eta}.$$
\end{lem}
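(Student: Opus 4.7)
The plan is to derive the identity directly from the defining compatibility relation \eqref{eq2} by a well-chosen specialization, followed by elementary group manipulations in $(N,\star)$.

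First, I would apply \eqref{eq2} with $\mu = \overline{\eta}$. Since $\overline{\eta}\star\eta = e_N$, the left-hand side becomes $g\odot e_N$, giving
\begin{equation*}
g\odot e_N \;=\; (g\odot \overline{\eta}) \star \overline{(g\odot e_N)} \star (g\odot \eta).
\end{equation*}
This single relation already contains all three quantities $g\odot e_N$, $g\odot\overline{\eta}$ and $g\odot\eta$, so from here the statement should follow by rearrangement.

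Next, I would multiply both sides on the left by $\overline{(g\odot e_N)}$ to clear the leading factor; this yields
\begin{equation*}
e_N \;=\; \overline{(g\odot e_N)} \star (g\odot \overline{\eta}) \star \overline{(g\odot e_N)} \star (g\odot \eta).
\end{equation*}
Finally, multiplying on the right by $\overline{(g\odot\eta)}$ isolates the target expression and produces exactly
\begin{equation*}
\overline{(g\odot\eta)} \;=\; \overline{(g\odot e_N)} \star (g\odot \overline{\eta}) \star \overline{(g\odot e_N)},
\end{equation*}
which is the claimed identity.

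There is essentially no obstacle here: the only non-mechanical step is the choice $\mu=\overline{\eta}$ in \eqref{eq2}, which is the natural way to force $g\odot e_N$ onto the left-hand side and thereby relate $g\odot\overline{\eta}$ to the inverse of $g\odot\eta$ in the (possibly non-abelian) group $(N,\star)$. The remainder is pure cancellation; in particular, no use of the $\gamma$-function is required, though one could equivalently phrase the derivation via the first formula in the preceding Remark.
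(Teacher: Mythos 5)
Your proof is correct: specializing \eqref{eq2} at $\mu=\overline{\eta}$ and cancelling is exactly the right (and essentially the only) argument, and the paper itself gives no proof here, simply citing \cite[Proposition 2.13]{MT24}, where the same derivation appears. No issues.
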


Now we generalise the $\ast $-operation of a skew brace $(B,\star ,\cdot)$.
Recall that the operation $\ast : B \times B \longrightarrow  B$ is
 defined by setting
$$a \ast b = \overline{a} \star (a\cdot b) \star  \overline{b}$$ for all $a, b \in B$.

\begin{defn}
Let $(G, \cdot ,N, \star   ,\odot )$ be a skew bracoid. For every $g \in G$ define the map
 $\alpha (g) : (N, \star ) \longrightarrow (N,\star )$   by setting
\begin{equation} \label{eq3}
^{\alpha (g)}\eta = \overline{(g \odot  e_N) } \star  (g \odot \eta) \star \overline{ \eta} = (^{\gamma(g)}\eta) \star \overline{ \eta},
\end{equation}
for all  $\eta \in N$.
\end{defn}

When a skew brace $(B,\star ,\cdot)$ is viewed as a skew bracoid we have $^{\alpha (a)}b = a\ast b$ for all $a, b \in B$ (see \cite{CSV19}).

We begin with some essential properties of the map $\alpha (g) $.

\begin{lem}\label{lem24} Let $(G, \cdot ,N, \star   ,\odot )$ be a skew bracoid. Then for every $g, h\in G$ and $\mu, \eta \in N$, we have:
\begin{enumerate}[\rm(1)]
\item $^{\alpha (g)}(\eta \star  \mu) = (^{\alpha (g)}\eta) \star  \eta \star  (^{\alpha (g)}\mu) \star \overline{ \eta}.$
\item $^{\alpha (g)}e_N = ^{\alpha (e_G)}  \eta =e_N.$
\item $^{\alpha (g)}\overline{\eta}  = \overline{\eta} \star (\overline{ ^{\alpha (g)}\eta}) \star \eta.$ 
\item $^{\alpha (gh)}\eta = (^{\alpha (g)}(^{\alpha (h)}\eta)) \star  (^{\alpha (h)}\eta) \star  (^{\alpha (g)}\eta).$
\end{enumerate}
\end{lem}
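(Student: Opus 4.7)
The plan is to unfold every occurrence of $^{\alpha(g)}$ via the definition $^{\alpha(g)}\eta=({}^{\gamma(g)}\eta)\star\overline{\eta}$ and reduce all four identities to cancellation in $(N,\star)$. The two inputs doing the work are those recorded in Definition~\ref{defn23a}: each $\gamma(g)$ is an automorphism of $(N,\star)$ (so it respects $\star$ and inverses), and $\gamma:(G,\cdot)\to\Aut(N,\star)$ is a group homomorphism. Only part (4) requires any real bookkeeping; the other three are one-line expansions.

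For (1) I would expand both sides into $\gamma(g)$-language. The left-hand side becomes $({}^{\gamma(g)}\eta)\star({}^{\gamma(g)}\mu)\star\overline{\mu}\star\overline{\eta}$ once $\gamma(g)$ is pushed across $\star$ and $\overline{\eta\star\mu}=\overline{\mu}\star\overline{\eta}$ is applied; the right-hand side expands to the same word after cancelling the central pair $\overline{\eta}\star\eta$. Identity (2) follows from ${}^{\gamma(g)}e_N=e_N$ for the first equality, and from $\gamma(e_G)=\mathrm{id}_N$ (giving $\eta\star\overline{\eta}$) for the second. For (3) I would use ${}^{\gamma(g)}\overline{\eta}=\overline{{}^{\gamma(g)}\eta}$ to rewrite the left-hand side as $\overline{{}^{\gamma(g)}\eta}\star\eta$, and independently invert the product $({}^{\gamma(g)}\eta)\star\overline{\eta}$ on the right-hand side to reach the same expression.

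The substantive step is (4). I would begin on the left with $^{\alpha(gh)}\eta=({}^{\gamma(g)}({}^{\gamma(h)}\eta))\star\overline{\eta}$, using that $\gamma$ is a homomorphism. On the right, set $\xi={}^{\alpha(h)}\eta=({}^{\gamma(h)}\eta)\star\overline{\eta}$; unfolding $^{\alpha(g)}\xi=({}^{\gamma(g)}\xi)\star\overline{\xi}$ and pushing $\gamma(g)$ across $\star$ and inverses gives $({}^{\gamma(g)}({}^{\gamma(h)}\eta))\star\overline{{}^{\gamma(g)}\eta}\star\eta\star\overline{{}^{\gamma(h)}\eta}$. Multiplying this on the right by $({}^{\alpha(h)}\eta)\star({}^{\alpha(g)}\eta)=({}^{\gamma(h)}\eta)\star\overline{\eta}\star({}^{\gamma(g)}\eta)\star\overline{\eta}$ inserts three successive cancelling pairs, namely $\overline{{}^{\gamma(h)}\eta}\star{}^{\gamma(h)}\eta$, then $\eta\star\overline{\eta}$, then $\overline{{}^{\gamma(g)}\eta}\star{}^{\gamma(g)}\eta$; what remains is exactly $({}^{\gamma(g)}({}^{\gamma(h)}\eta))\star\overline{\eta}$, matching the left-hand side.

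The only real obstacle is purely bookkeeping in part (4): because $(N,\star)$ is not assumed abelian, every factor must stay in its written position, and the three telescoping cancellations have to be checked in the correct order. Once the initial rewrite of ${}^{\alpha(g)}({}^{\alpha(h)}\eta)$ is produced from the definition and the fact that $\gamma(g)\in\Aut(N,\star)$, the outcome is forced.
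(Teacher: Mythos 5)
Your proof is correct and follows essentially the same route as the paper: unfold $\alpha$ in terms of $\gamma$, use that each $\gamma(g)$ is an automorphism of $(N,\star)$ and that $\gamma$ is a homomorphism, and let the telescoping cancellations in $(N,\star)$ do the rest. The paper likewise treats (1)--(3) as immediate and writes out only (4), differing from your version merely in the order of the cancellations (it cancels $\overline{{}^{\alpha(h)}\eta}\star{}^{\alpha(h)}\eta$ before distributing $\gamma(g)$, where you distribute first), which is an inessential variation.
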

\begin{proof} Only Statement (4) is in doubt.
\begin{multline*} 
(^{\alpha (g)}(^{\alpha (h)}\eta)) \star  (^{\alpha (h)}\eta) \star  (^{\alpha (g)}\eta) =
 (^{\gamma(g)}(^{\alpha (h)}\eta)) \star (\overline{ ^{\alpha (h)}\eta }) \star  (^{\alpha (h)}\eta) \star  (^{\alpha (g)}\eta) = \\ 
(^{\gamma(g)}(^{\gamma(h)}\eta \star \overline{ \eta}) )\star  (^{\gamma(g)}\eta) \star \overline{ \eta} = 
(^{\gamma(g)}(^{\gamma(h)}\eta \star \overline{ \eta} \star  \eta)) \star \overline{ \eta} = (^{\gamma(gh)}\eta) \star \overline{ \eta} = ^{\alpha (gh)}  \eta .
\end{multline*} 
\end{proof}

\begin{cor}\label{c23} Let $(G, \cdot ,N, \star   ,\odot )$ be a  bracoid (i.e. $(N,\star )$ is an abelian group).  Then for every $g \in G$ the map
$\alpha (g)$ is an endomorhism of  the group $(N,\star )$.

\end{cor}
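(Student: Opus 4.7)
The proof is essentially an immediate consequence of Lemma~\ref{lem24}(1), so my plan is simply to specialize that identity to the abelian setting.

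Concretely, I would start from the identity
\[
^{\alpha(g)}(\eta \star \mu) = (^{\alpha(g)}\eta) \star \eta \star (^{\alpha(g)}\mu) \star \overline{\eta},
\]
which is Lemma~\ref{lem24}(1) and holds in any skew bracoid. Under the hypothesis that $(N,\star)$ is abelian, the middle conjugation $\eta \star (^{\alpha(g)}\mu) \star \overline{\eta}$ collapses to $^{\alpha(g)}\mu$. Therefore
\[
^{\alpha(g)}(\eta \star \mu) = (^{\alpha(g)}\eta) \star (^{\alpha(g)}\mu),
\]
which is precisely the statement that $\alpha(g)$ is a homomorphism of $(N,\star)$ into itself. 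Together with Lemma~\ref{lem24}(2), which records $^{\alpha(g)}e_N = e_N$, this yields that $\alpha(g)$ is an endomorphism of the abelian group $(N,\star)$.

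There is no real obstacle here: the corollary is just the abelian reduction of Lemma~\ref{lem24}(1), and the only thing to verify is that the conjugation term disappears, which is immediate from commutativity. The brevity of the argument is the reason it is stated as a corollary rather than as a separate proposition.
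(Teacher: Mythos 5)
Your proof is correct and matches the paper's intended argument: the corollary is left as an immediate consequence of Lemma~\ref{lem24}(1), with commutativity of $(N,\star)$ collapsing the conjugation term exactly as you describe. Nothing further is needed.
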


A right skrew bracoid is defined analogously to a left skew bracoid.

\begin{defn}\label{defn31}
A \emph{ right skew bracoid} is a $5$-tuple $(H, \circ ,N, \star   ,\boxdot )$ such that $(H, \circ )$ and $(N, \star )$ are groups and $\boxdot $ 
is a transitive right action of $(H, \circ )$ on $(N,\star )$ such that
\begin{equation} \label{eq4}
  (\eta \star   \mu) \boxdot  g = (  \eta \boxdot g) \star   \overline{(  e_N \boxdot g)}  \star  ( \mu \boxdot g) 
\end{equation}
for all $g\in H$ and $\eta,\mu \in N$. If $(N,\star )$ is an abelian group, then we shall call $(H, \circ ,N, \star   ,\boxdot )$ a \emph{ right bracoid}.
\end{defn}

Let $(H, \circ ,N, \star   ,\boxdot )$ be a right skew bracoid. We can  define the homomorphism $\delta : H \longrightarrow \Aut(N)$ as 
$$\eta^{\delta (h)}  = (\eta \boxdot h) \star  \overline{(e_N \boxdot h)}\, \text{for\, all}\, h \in H\, \text{and}\, \eta\in N.$$
We call it the \emph{$\delta $-function} of the right skew bracoid. 

\begin{defn} \label{defn23} Let $(H, \circ )$ and $(N, \star )$ be groups and $\boxdot $ 
be a transitive right action of $(H, \circ )$ on $(N,\star )$. We define for every $h\in H$ the map $\beta (h) : (N, \star ) \longrightarrow (N, \star )$
by setting 
\begin{equation} \label{eq5}
  \eta ^{\beta (h)} = \overline{\eta  } \star  (\eta \boxdot h) \star \overline{ (e_N  \boxdot h)},
\end{equation}
for all $\eta \in N$.
\end{defn}

Let $(H, \circ ,N, \star   ,\boxdot )$ be a right skew bracoid. Then 
$$\eta ^{\beta (h)} =  \overline{ \eta}  \star \eta^{\delta(h)}$$
for all $h\in H$ and $\eta \in N$. 
We can also prove some essential properties of the map $\beta (h)$ for every $h\in H$.

\begin{lem}\label{lem31} Let $(H, \circ ,N, \star   ,\boxdot )$ be a right skew bracoid.  Then for every $ h\in H$ and $\mu, \eta \in N$, we have:
\begin{enumerate}[\rm(1)]
\item $ (\mu \star  \eta)^{\beta (h)} =   \overline{ \eta} \star ( \mu ^{\beta (h)}) \star  \eta   \star  (\eta ^{\beta (h)}).$
\item $ e_N ^{\beta (h)} =  \eta ^{\beta (e_H)} =e_N.$
\item $ \overline{\eta}^{\beta (h)}  = \eta \star (\overline{  \eta ^{\beta (h)}}) \star \overline{\eta }.$ 
\end{enumerate}
\end{lem}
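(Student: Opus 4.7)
The plan is to reduce everything to the identity $\eta^{\beta(h)} = \overline{\eta} \star \eta^{\delta(h)}$ already noted just before the lemma, and then exploit the two facts asserted about $\delta$: that for each fixed $h \in H$ the map $\delta(h)$ is an automorphism of $(N, \star)$, and that $\delta$ is a group homomorphism $H \to \Aut(N)$. With these, each of (1)--(3) becomes a short rewriting exercise, completely parallel (with orders of multiplication reversed) to Lemma~\ref{lem24} for the left skew bracoid, so no new structural input is needed.

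Concretely, for (1) I would start from $(\mu \star \eta)^{\beta(h)} = \overline{\mu \star \eta} \star (\mu \star \eta)^{\delta(h)}$, apply the automorphism property of $\delta(h)$ to split $(\mu \star \eta)^{\delta(h)} = \mu^{\delta(h)} \star \eta^{\delta(h)}$, and then insert a telescoping factor $\eta \star \overline{\eta}$ in the middle to reassemble the two groups of three factors as $\overline{\eta} \star \mu^{\beta(h)} \star \eta$ and $\eta^{\beta(h)}$. For (2), both equalities are immediate: $e_N^{\beta(h)} = \overline{e_N} \star e_N^{\delta(h)} = e_N$ since an automorphism fixes $e_N$, and $\eta^{\beta(e_H)} = \overline{\eta} \star \eta^{\delta(e_H)} = \overline{\eta} \star \eta = e_N$ because $\delta(e_H) = \mathrm{id}_N$ (being the identity of $\Aut(N)$).

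For (3), one side is $\overline{\eta}^{\beta(h)} = \eta \star \overline{\eta}^{\delta(h)} = \eta \star \overline{\eta^{\delta(h)}}$, the last equality using that $\delta(h)$ is an automorphism. The other side is $\eta \star \overline{\eta^{\beta(h)}} \star \overline{\eta} = \eta \star \overline{\overline{\eta} \star \eta^{\delta(h)}} \star \overline{\eta}$, which expands by reversing the inverse and then collapses via $\eta \star \overline{\eta} = e_N$ to give the same expression $\eta \star \overline{\eta^{\delta(h)}}$.

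The only mild obstacle is bookkeeping of the non-commutative order in $(N, \star)$: unlike the left case, conjugating factors appear on the \emph{right}, so the telescoping $\eta \star \overline{\eta}$ must be inserted at exactly the right spot in (1), and in (3) care is needed to interchange $\overline{\,\cdot\,}$ with $\delta(h)$ rather than with $\beta(h)$. Once the identity $\eta^{\beta(h)} = \overline{\eta} \star \eta^{\delta(h)}$ is used systematically, the entire lemma follows without invoking the bracoid compatibility \eqref{eq4} directly.
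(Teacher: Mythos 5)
Your proof is correct: each of (1)--(3) checks out, and reducing $\beta(h)$ to $\delta(h)$ via $\eta^{\beta(h)} = \overline{\eta}\star \eta^{\delta(h)}$ together with the automorphism and homomorphism properties of $\delta$ is precisely the right-handed mirror of the paper's treatment of Lemma~\ref{lem24} (the paper itself leaves this lemma unproved as routine). The only remark worth making is that the multiplicativity of $\delta(h)$ is itself equivalent to the compatibility condition \eqref{eq4}, so that axiom is still being used, merely packaged inside the assertion that $\delta(h)\in\Aut(N)$.
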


\begin{cor}\label{c25} Let $(H, \circ ,N, \star   ,\boxdot )$ be a right  bracoid. Then for every  $h\in H$ the map
$\beta (h)$ is an endomorhism of  the  group $(N,\star )$.
\end{cor}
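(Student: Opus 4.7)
The statement is an immediate corollary of Lemma \ref{lem31} in the abelian setting, completely parallel to how Corollary \ref{c23} follows from Lemma \ref{lem24}(1). My plan is to read off the homomorphism property directly from part (1) of Lemma \ref{lem31}.

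First I would recall the formula
$$(\mu \star \eta)^{\beta(h)} = \overline{\eta} \star \mu^{\beta(h)} \star \eta \star \eta^{\beta(h)}$$
provided by Lemma \ref{lem31}(1), which holds in any right skew bracoid with no abelian assumption. The obstruction to $\beta(h)$ being a homomorphism is exactly the conjugation factor $\overline{\eta} \star \mu^{\beta(h)} \star \eta$ on the right-hand side. Under the hypothesis that $(N,\star)$ is abelian, this factor collapses to $\mu^{\beta(h)}$, and the identity becomes
$$(\mu \star \eta)^{\beta(h)} = \mu^{\beta(h)} \star \eta^{\beta(h)},$$
which is precisely the homomorphism condition.

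To conclude that $\beta(h)$ is an \emph{endomorphism} (and not merely a homomorphism into $N$), I would invoke Lemma \ref{lem31}(2), which gives $e_N^{\beta(h)} = e_N$; together with the multiplicativity just established, this shows that $\beta(h)$ is a well-defined endomorphism of $(N,\star)$.

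I do not foresee any real obstacle: the entire content of the corollary is the observation that commutativity of $(N,\star)$ kills the conjugation appearing in Lemma \ref{lem31}(1). The whole argument should take at most one short displayed equation, matching the brevity of the analogous proof of Corollary \ref{c23} in the left-handed case.
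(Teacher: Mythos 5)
Your proposal is correct and is exactly the argument the paper intends: the corollary is stated without proof precisely because, as you observe, commutativity of $(N,\star)$ collapses the conjugation factor $\overline{\eta}\star\mu^{\beta(h)}\star\eta$ in Lemma \ref{lem31}(1) to $\mu^{\beta(h)}$, yielding the homomorphism property. No gaps.
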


When a skew brace $(B,\star ,\cdot)$ is viewed as a skew bracoid we have $a^{\beta (b)} = a\ast b$ for all $a, b \in B$.

\section{Two-sided bracoids}

\begin{defn}\label{defn32}
Let $(G,\cdot )$, $(H, \circ )$ and $(N, \star )$ be groups.
If $(G, \cdot ,N, \star   ,\odot )$ is a left skrew bracoid and $(H, \circ ,N, \star   ,\boxdot )$ is a right skew bracoid  such that
\begin{equation} \label{eq6}
 g \odot  ( \eta \boxdot  h) = (g \odot   \eta) \boxdot  h 
\end{equation}
for all  $ g\in G$, $h\in H$ and $\eta \in N$,
then we shall call $(G, \cdot ,\odot , H, \circ ,\boxdot , N, \star )$ a \emph{two-sided skew bracoid}. If $(N,\star )$ is an abelian group, then we shall call 
$(G, \cdot ,\odot , H, \circ ,\boxdot , N, \star )$ a \emph{two-sided bracoid}.
\end{defn}

\begin{exmp} In this example we denote by $g^{-1}$ the inverse element of g of a group.
Let $t, w\in \mathbb{N}$, let $d$ be a positive divisor of $\gcd(t, w)$, let
\begin{equation*}
G = \langle x, y \mid x^t = y^4 = 1, x^y = x^{-1} \rangle, \quad H = \langle a, b \mid a^{2w}  = 1, a^w =b^2, a^b = a^{-1} \rangle
\end{equation*}
and let 
\begin{equation*}
N = \langle \mu, \eta \mid \mu^d = \eta^2 = 1, \mu^\eta = \mu^{-1} \rangle = D_d.
\end{equation*}
Then the rule
\begin{equation*}
x^iy^j \odot \mu^r\eta^s = \mu^{i+(-1)^jr}\eta^{j+s}
\end{equation*}
defines a transitive action of $G$ on $N,$ and we have
\begin{multline*}   (x^iy^j \odot \mu^{r_1}\eta^{s_1}) \star  (x^iy^j \odot e_N)^{-1} \star (x^iy^j \odot \mu^{r_2}\eta^{s_2}) = \\
\mu^{i+(-1)^jr_1}\eta^{j+s_1} \star (\mu^{i}\eta^{j})^{-1} \star \mu^{i+(-1)^jr_2}\eta^{j+s_2}=
\mu^{i+(-1)^jr_1+(-1)^{j+s_1}r_2}\eta^{j+s_1+s_2},
\end{multline*}  
\begin{equation*}   (\mu^{r_1}\eta^{s_1}) \star   (\mu^{r_2}\eta^{s_2}) = \mu^{r_1+(-1)^{s_1}r_2}\eta^{s_1+s_2}
\end{equation*}  
\begin{multline*}   x^iy^j \odot ((\mu^{r_1}\eta^{s_1}) \star  ( \mu^{r_2}\eta^{s_2}) )= \\
x^iy^j \odot (\mu^{r_1+(-1)^{s_1}r_2}\eta^{s_1+s_2} )=
\mu^{i+(-1)^jr_1+(-1)^{j+s_1}r_2}\eta^{j+s_1+s_2},
\end{multline*}  
Therefore $(G, \cdot  , N, \star ,\odot )$ is a left skew bracoid.

The rule
\begin{equation*}
 \mu^r\eta^s  \boxdot a^kb^l = \mu^{r+(-1)^sk}\eta^{s+l}
\end{equation*}
defines a transitive right action of $H$ on $N,$ and we have
\begin{multline*}    ((\mu^{r_1}\eta^{s_1}) \star  ( \mu^{r_2}\eta^{s_2})) \boxdot a^kb^l = \\
(\mu^{r_1+(-1)^{s_1}r_2}\eta^{s_1+s_2}) \boxdot a^kb^l =
\mu^{r_1+(-1)^{s_1}r_2 + (-1)^{s_1+s_2}k}\eta^{l+s_1+s_2},
\end{multline*}  
\begin{multline*}    ((\mu^{r_1}\eta^{s_1}) \boxdot a^kb^l ) \star (e_N \boxdot a^kb^l)^{-1} \star  (( \mu^{r_2}\eta^{s_2}) \boxdot a^kb^l) = \\
\mu^{r_1+(-1)^{s_1}k}\eta^{s_1+l} \star (\mu^k\eta^l)^{-1} \star \mu^{r_2+(-1)^{s_2}k}\eta^{s_2+l}=
\mu^{r_1+(-1)^{s_1}r_2 + (-1)^{s_1+s_2}k}\eta^{l+s_1+s_2}.
\end{multline*}  
Hence $(H, \circ  , N, \star ,\boxdot )$ is a right skew bracoid.
Finally,
\begin{equation*}   
(x^iy^j \odot \mu^r\eta^s) \boxdot a^kb^l = \mu^{i+(-1)^jr}\eta^{j+s} \boxdot a^kb^l =\mu^{i+(-1)^jr+(-1)^{j+s}k}\eta^{j+s+l}
\end{equation*}  
and
\begin{equation*}   
x^iy^j \odot (\mu^r\eta^s \boxdot a^kb^l) = x^iy^j \odot (\mu^{r+(-1)^sk}\eta^{s+l}) =\mu^{i+(-1)^jr+(-1)^{j+s}k}\eta^{j+s+l}
\end{equation*}  
Therefore $(G, \cdot ,\odot , H, \circ ,\boxdot , N, \star )$ is a skew two-sided  bracoid. 
\end{exmp}

In \cite{WR07} Rump showed that if a left brace $(B, \star ,\cdot )$ is  a two-sided brace and the operation $\ast : B \times B \longrightarrow  B$ is
 defined by
$a \ast b = a\cdot b \star \overline{a} \star \overline{b}$ for all $a, b \in B$ then $(B, \star ,\ast )$ is a Jacobson radical ring
 (see also \cite[Proposition 1]{CGS18}). Now we will prove a  bracoid version of this result.

\begin{thm}\label{thm32} If $(G, \cdot ,\odot , H, \circ ,\boxdot , N, \star )$ is a two-sided  bracoid (i.e. $(N, \star )$ is an abelian group), then for all 
$ g\in G$, $h\in H$ and $\mu , \eta \in N$ we have 
\begin{enumerate}[\rm(1)]
\item $\alpha (g)$ is an endomorphism of $(N,\star )$.
\item $\beta (h)$ is an endomorphism of $(N,\star )$.
\item $(^{\gamma (g)} \eta)^{\delta (h)} =  {^{\gamma (g)}(\eta^{\delta (h)})}$. 
\item $(^{\alpha (g)}\eta)^{\beta (h)} =   {^{\alpha (g)}(\eta ^{\beta (h)})}$.
\end{enumerate}
\end{thm}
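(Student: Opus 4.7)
Parts (1) and (2) are immediate: they are Corollary \ref{c23} and Corollary \ref{c25} respectively, applied to the underlying left bracoid $(G,\cdot,N,\star,\odot)$ and right bracoid $(H,\circ,N,\star,\boxdot)$. The substance of the theorem is therefore in (3) and (4). My plan is to carry out (3) by a direct computation that expands each side using the definitions of $\gamma$ and $\delta$, then uses the compatibility identity \eqref{eq6} to slide $\odot$ past $\boxdot$, with the abelian hypothesis on $(N,\star)$ used to simplify the bracoid distributive laws \eqref{eq2} and \eqref{eq4} (which in the abelian setting reduce to $g\odot(\mu\star\eta) = (g\odot\mu) \star (g\odot\eta) \star \overline{g\odot e_N}$ and analogously on the right).

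For (3), I would compute
$$(^{\gamma(g)}\eta)^{\delta(h)} = \bigl((\overline{g\odot e_N}\star (g\odot\eta))\boxdot h\bigr) \star \overline{e_N\boxdot h},$$
distribute using \eqref{eq4} and simplify the term $(\overline{g\odot e_N})\boxdot h$ via the abelian version of the right bracoid identity applied to $(g\odot e_N)\star\overline{(g\odot e_N)}=e_N$, which gives $(\overline{g\odot e_N})\boxdot h = \overline{(g\odot e_N)\boxdot h}\star (e_N\boxdot h)\star(e_N\boxdot h)$. After cancellation one is left with $((g\odot\eta)\boxdot h)\star \overline{(g\odot e_N)\boxdot h}$. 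Symmetrically, I would compute $^{\gamma(g)}(\eta^{\delta(h)})$ by first pushing $g\odot$ across $\star$ using \eqref{eq2}, then handling $g\odot\overline{e_N\boxdot h}$ via Lemma \ref{lem23}; the outcome is $(g\odot(\eta\boxdot h))\star\overline{g\odot(e_N\boxdot h)}$. The compatibility \eqref{eq6} identifies these two expressions, which completes (3).

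For (4), the plan is to reduce to (3). Using the identities $^{\alpha(g)}\eta = (^{\gamma(g)}\eta)\star\overline{\eta}$ and $\eta^{\beta(h)} = \overline{\eta}\star\eta^{\delta(h)}$ together with the fact that $\gamma(g)$ and $\delta(h)$ are automorphisms of the abelian group $(N,\star)$, both $(^{\alpha(g)}\eta)^{\beta(h)}$ and $^{\alpha(g)}(\eta^{\beta(h)})$ expand into the same additive combination of $^{\gamma(g)}\eta$, $\eta^{\delta(h)}$, $\gamma(g)(\eta^{\delta(h)})$ (equivalently, $(^{\gamma(g)}\eta)^{\delta(h)}$ by (3)), and $\eta$. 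Commutativity of $\star$ then collapses the two expressions to the same element.

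The main obstacle I anticipate is purely bookkeeping in step (3): keeping track of the ``correction terms'' $\overline{g\odot e_N}$ and $\overline{e_N\boxdot h}$ introduced by each application of the distributive laws. The abelian hypothesis on $(N,\star)$ is doing real work here, because it allows the two correction terms (one from each bracoid) to be permuted freely with everything else so that they cancel in pairs; without commutativity the identity (3) would involve conjugation and would not reduce to equality of $\gamma(g)\delta(h)$ and $\delta(h)\gamma(g)$ in $\Aut(N,\star)$, which is exactly the content the theorem is asserting once this obstacle is removed.
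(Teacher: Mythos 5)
Your proposal is correct and follows essentially the same route as the paper: (1)--(2) from Corollaries \ref{c23} and \ref{c25}, (3) by expanding both sides, using that $\gamma(g)$ and $\delta(h)$ are homomorphisms of the abelian group $(N,\star)$ to reduce each side to $((g\odot\eta)\boxdot h)\star\overline{(g\odot e_N)\boxdot h}$ and $(g\odot(\eta\boxdot h))\star\overline{g\odot(e_N\boxdot h)}$ respectively, and then invoking the compatibility \eqref{eq6}; and (4) by the additive expansion $\delta(h)\gamma(g)\eta-\delta(h)\eta-\gamma(g)\eta+\eta=\gamma(g)\delta(h)\eta-\gamma(g)\eta-\delta(h)\eta+\eta$, which is exactly the paper's computation. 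Your closing observation that (3) is precisely the commutation of $\gamma(g)$ and $\delta(h)$ in $\Aut(N,\star)$ is also the right way to read the theorem.
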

\begin{proof}  Since $(N,\star )$ is abelian, by Corollaries \ref{c23} and \ref{c25} we obtain (1)-(2).

Now we prove (3). Let $ g\in G$, $h\in H$ and $\eta \in N$.
\begin{multline*}  
(^{\gamma (g)}\eta)^{\delta (h)} = (\overline{(g \odot  e_N)} \star  (g \odot \eta))^{\delta (h)}  = \overline{(g \odot  e_N)^{\delta (h)}} \star  (g \odot \eta)^{\delta (h)} = \\
 \overline{((g \odot  e_N)\boxdot h) \star \overline{( e_N \boxdot h)}} \star  ((g \odot  \eta)\boxdot h) \star \overline{( e_N \boxdot h)} =\\
 \overline{((g \odot  e_N)\boxdot h)} \star ( e_N \boxdot h) \star  ((g \odot  \eta)\boxdot h) \star \overline{( e_N \boxdot h)} =\\
 \overline{((g \odot  e_N)\boxdot h)}  \star  ((g \odot  \eta)\boxdot h)  =\\
  \overline{(g \odot  (e_N\boxdot h))} \star ( g\odot e_N) \star  (g \odot  (\eta\boxdot h)) \star \overline{( g\odot e_N)} =\\
\overline{^{\gamma (g)}(e_N\boxdot h)} \star {^{\gamma (g)}(\eta\boxdot h)} = {^{\gamma (g)}(\overline{(e_N\boxdot h)} \star (\eta\boxdot h))}
 = {^{\gamma (g)}(\eta^{\delta (h)})}.
\end{multline*} 
So only (4) is in doubt. Let $ g\in G$, $h\in H$ and $\eta \in N$.
Then by using (3) we have 
\begin{multline*}
(^{\alpha (g)}\eta)^{\beta (h)} =  (^{\gamma (g)}\eta \star  \overline{\eta}) ^{\beta (h)}  = 
\overline{(^{\gamma (g)}\eta \star  \overline{\eta})} \star (^{\gamma (g)}\eta \star  \overline{\eta})^{\delta (h)} = \\
 (^{\gamma (g)}\overline{\eta} \star  \eta) \star (^{\gamma (g)}\eta)^{\delta (h)} \star  \overline{\eta}^{\delta (h)} =
 {^{\gamma (g)}\overline{\eta}} \star  \eta \star {^{\gamma (g)}(\eta^{\delta (h)})} \star  \overline{\eta}^{\delta (h)} =\\
 ^{\gamma (g)}(\overline{\eta} \star  \eta^{\delta (h)}) \star \overline{(\overline{\eta} \star  \eta^{\delta (h)})}  = 
  {^{\alpha (g)}(\overline{\eta} \star  \eta^{\delta (h)})} = {^{\alpha (g)}(\eta ^{\beta (h)})}.
\end{multline*}
\end{proof}

When a two-sided brace $(B,\star ,\cdot)$ is viewed as a two-sided bracoid we have $$a^{\beta (b)} = a\ast b = {^{\alpha (a)}b}$$ for all $a, b \in B$.
Hence Theorem \ref{thm32} is a generalisation of the result of Rump.

In \cite{L} Lau showed that if $(B, \star ,\cdot )$ is a left brace and the operation $\ast : B \times B \longrightarrow  B$ defined by
$a \ast b = a\cdot b \star \overline{a} \star \overline{b}$ for all $a, b \in B$ is asssociative, then $B$ is a two-sided brace. In the next part of the section 
we will prove a  bracoid version of this result.

\begin{prop}\label{p34}  Let $(G, \cdot ,N, \star   ,\odot )$ be a left skew bracoid and $(H, \circ )$ be a group. Assume that
$\boxdot $ is a transitive right action of $(H, \circ )$ on $N$  such that
\begin{equation*} 
  g \odot  ( \eta \boxdot  h) = (g \odot   \eta) \boxdot  h  
\end{equation*}
for all $g\in G, h \in H,$ $\eta \in N$. 
Then for all  $g\in G,$ $\eta \in N$ and $h \in H$ we have 
\begin{equation*} 
(g\odot \eta)^{\beta(h)} = (^{\alpha(g)}(\eta ^{\beta(h)})) \star (\eta ^{\beta(h)}) \star (^{\alpha(g)}(e_N \boxdot h)).
\end{equation*}
\end{prop}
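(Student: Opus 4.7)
The plan is to expand the left-hand side using the definition of $\beta(h)$, exploit the compatibility relation $g\odot(\eta\boxdot h) = (g\odot\eta)\boxdot h$ to move the action outside, and then apply the left bracoid distributive law to a convenient three-factor decomposition of $\eta\boxdot h$ inside $(N,\star)$.

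The key preliminary observation is that by the very definition of $\beta(h)$ we can rewrite
\begin{equation*}
\eta\boxdot h = \eta \star \eta^{\beta(h)} \star (e_N\boxdot h),
\end{equation*}
just by multiplying $\overline{\eta}\star(\eta\boxdot h)\star\overline{(e_N\boxdot h)}$ on the left by $\eta$ and on the right by $(e_N\boxdot h)$. With this in hand, I would begin the main computation from
\begin{equation*}
(g\odot\eta)^{\beta(h)}=\overline{(g\odot\eta)}\star\bigl((g\odot\eta)\boxdot h\bigr)\star\overline{(e_N\boxdot h)},
\end{equation*}
use compatibility to replace $(g\odot\eta)\boxdot h$ by $g\odot(\eta\boxdot h)$, and substitute the three-factor expression above.

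Next I would apply the left bracoid distributive law \eqref{eq2} twice to expand $g\odot\bigl(\eta\star\eta^{\beta(h)}\star(e_N\boxdot h)\bigr)$ into the five-fold product
\begin{equation*}
(g\odot\eta)\star\overline{(g\odot e_N)}\star(g\odot\eta^{\beta(h)})\star\overline{(g\odot e_N)}\star\bigl(g\odot(e_N\boxdot h)\bigr).
\end{equation*}
Inserting this into the previous step, the outer factor $\overline{(g\odot\eta)}$ cancels $(g\odot\eta)$ on the left, and the tail $\overline{(e_N\boxdot h)}$ survives on the right, leaving
\begin{equation*}
\overline{(g\odot e_N)}\star(g\odot\eta^{\beta(h)})\star\overline{(g\odot e_N)}\star\bigl(g\odot(e_N\boxdot h)\bigr)\star\overline{(e_N\boxdot h)}.
\end{equation*}

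Finally, I would regroup this expression: the first two factors form $^{\gamma(g)}(\eta^{\beta(h)}) = {^{\alpha(g)}(\eta^{\beta(h)})}\star\eta^{\beta(h)}$ by \eqref{eq3}, and the last three factors form precisely $^{\alpha(g)}(e_N\boxdot h)$ by the definition of $\alpha$. Concatenating these two identifications gives the stated right-hand side. I do not expect any genuine obstacle: abelianness of $(N,\star)$ is not needed, all moves are direct applications of the definitions of $\alpha$, $\beta$, the compatibility \eqref{eq6}, and the left distributive law. The only care required is bookkeeping, namely to expand and regroup factors in the correct non-abelian order so that the $\overline{(g\odot e_N)}$ and $\overline{(e_N\boxdot h)}$ cancellations line up exactly with the definitions of $\alpha(g)$ applied to $\eta^{\beta(h)}$ and to $e_N\boxdot h$.
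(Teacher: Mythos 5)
Your proof is correct and follows essentially the same route as the paper's: both expand $(g\odot\eta)^{\beta(h)}$ by definition, use the compatibility relation to pass to $g\odot(\eta\boxdot h)$, substitute $\eta\boxdot h=\eta\star\eta^{\beta(h)}\star(e_N\boxdot h)$, apply the left bracoid law, cancel $\overline{(g\odot\eta)}\star(g\odot\eta)$, and regroup into the stated right-hand side. Your observation that abelianness of $(N,\star)$ is not needed is also consistent with the paper, which states and proves the result for skew bracoids.
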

\begin{proof}  For all $g\in G,$ $\eta \in N$ and $h \in H$ we obtain
\begin{multline*} 
(g\odot \eta)^{\beta(h)} = (\overline{g\odot \eta}) \star ((g\odot \eta) \boxdot h) \star \overline{(e_N \boxdot h)} =
\overline{(g\odot \eta)} \star (g\odot (\eta \boxdot h)) \star \overline{(e_N \boxdot h)} =\\
\overline{(g\odot \eta)} \star (g\odot (\eta \star (\eta ^{\beta(h)}) \star (e_N \boxdot h))) \star \overline{(e_N \boxdot h)} = \\
\overline{(g\odot \eta)} \star (g\odot \eta) \star \overline{(g \odot e_N )} \star (g\odot ( (\eta ^{\beta(h)}) \star (e_N \boxdot h))) \star \overline{(e_N \boxdot h)} =\\
\overline{(g \odot e_N )} \star (g\odot  (\eta ^{\beta(h)})) \star \overline{(g \odot e_N )} \star (g\odot (e_N \boxdot h)) \star \overline{(e_N \boxdot h)} =\\
(^{\alpha(g)}(\eta ^{\beta(h)})) \star (\eta ^{\beta(h)}) \star (^{\alpha(g)}(e_N \boxdot h)).
\end{multline*}
\end{proof}

\begin{prop}\label{p35}  Let $(G, \cdot ,N, \star   ,\odot )$ be a left bracoid and $(H, \circ )$ be a group. Assume that 
\begin{enumerate}[\rm(a)]
\item $\boxdot $ is a transitive right action of $(H, \circ )$ on $N$  such that
\begin{equation*} 
  g \odot  ( \eta \boxdot  h) = (g \odot   \eta) \boxdot  h  
\end{equation*}
for all $g\in G, h \in H,$ $\eta \in N$, 
\item $^{\alpha(g)}(\eta ^{\beta(h)}) = (^{\alpha(g)}\eta )^{\beta(h)}$ for all $g\in G, h\in H$ and $\eta \in N$.
\end{enumerate}
 Then for all $\eta \in N$ and $h \in H$ we have 
\begin{enumerate}[\rm(1)]
\item $\overline{\eta}^{\beta(h)} = \overline{\eta^{\beta(h)}}$.
\item $\overline{\eta} \boxdot h = (e_N \boxdot h)^2 \star \overline{\eta \boxdot h}$.
\end{enumerate}
\end{prop}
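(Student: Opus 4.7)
My plan is to establish (1) first and then obtain (2) as an immediate reformulation. Unpacking $\overline{\eta}^{\beta(h)} = \eta \star (\overline{\eta}\boxdot h) \star \overline{e_N \boxdot h}$ and $\overline{\eta^{\beta(h)}} = \eta \star (e_N \boxdot h) \star \overline{\eta\boxdot h}$ from the definition of $\beta(h)$ and reordering in the abelian group $(N,\star)$, the identity asserted in (1) reduces to $(\overline{\eta}\boxdot h)\star\overline{e_N\boxdot h} = (e_N\boxdot h)\star\overline{\eta\boxdot h}$, and multiplying both sides by $e_N\boxdot h$ on the right yields exactly (2). So the entire task reduces to proving (1).

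For (1), the pivotal idea is to exploit the transitivity of $\odot$ and choose $g\in G$ with $g\odot e_N = \eta$. This single choice triggers two simultaneous collapses. First, hypothesis (a) gives $g\odot(e_N\boxdot h) = (g\odot e_N)\boxdot h = \eta\boxdot h$, so the definition of $\alpha(g)$ reduces $(^{\alpha(g)}(e_N\boxdot h))$ to $\overline{\eta}\star(\eta\boxdot h)\star\overline{e_N\boxdot h}$, which is exactly $\eta^{\beta(h)}$. Second, the defining expression $^{\alpha(g)}\overline{\eta} = \overline{g\odot e_N}\star(g\odot\overline{\eta})\star\eta$ collapses in the abelian group to $g\odot\overline{\eta}$, since $\overline{g\odot e_N} = \overline{\eta}$ cancels with $\eta$.

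With these simplifications in hand, I apply Proposition~\ref{p34} with argument $\overline{\eta}$ and this same $g$ to obtain $(g\odot\overline{\eta})^{\beta(h)} = (^{\alpha(g)}(\overline{\eta}^{\beta(h)}))\star\overline{\eta}^{\beta(h)}\star(^{\alpha(g)}(e_N\boxdot h))$. Hypothesis (b) turns the first factor on the right into $(^{\alpha(g)}\overline{\eta})^{\beta(h)} = (g\odot\overline{\eta})^{\beta(h)}$, while the last factor is $\eta^{\beta(h)}$ by the first collapse above. The term $(g\odot\overline{\eta})^{\beta(h)}$ then cancels from both sides in the abelian group $(N,\star)$, leaving $e_N = \overline{\eta}^{\beta(h)}\star\eta^{\beta(h)}$, which is (1). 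I expect the main obstacle to be spotting the choice $g\odot e_N = \eta$: this is precisely what simultaneously collapses $(^{\alpha(g)}(e_N\boxdot h))$ and $^{\alpha(g)}\overline{\eta}$ to the forms needed for the cancellation after invoking (b); once this choice is made, the remaining steps are routine abelian manipulations.
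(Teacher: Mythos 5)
Your proposal is correct and follows essentially the same route as the paper: choose $g$ with $g\odot e_N=\eta$ by transitivity, note that $^{\alpha(g)}\overline{\eta}=g\odot\overline{\eta}$ and $^{\alpha(g)}(e_N\boxdot h)=\eta^{\beta(h)}$, apply Proposition~\ref{p34} to $g\odot\overline{\eta}$, and cancel using hypothesis (b); part (2) then follows by unpacking the definition of $\beta(h)$. Your explicit observation that (1) and (2) are equivalent reformulations is a small clarification the paper leaves implicit, but the argument is the same.
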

\begin{proof}  (1) Let $\eta \in N$ and $h \in H$. Since $\odot $ is a transitive action of $(G,\cdot  )$ on $N$, there exists $g \in G$ such that 
$g \odot  e_N = \eta$. Hence by Proposition \ref{p34}  we have 
\begin{multline*} 
(^{\alpha(g)}\overline{\eta})^{\beta(h)} = ((^{\alpha(g)}\overline{\eta}) \star \eta \star  \overline{\eta})^{\beta(h)} =
 ((^{\alpha(g)}\overline{\eta}) \star (g \odot  e_N) \star  \overline{\eta}) ^{\beta(h)} =\\
 (g \odot \overline{\eta})^{\beta(h)} = (^{\alpha(g)}(\overline{\eta}^{\beta(h)})) \star (\overline{\eta}^{\beta(h)}) \star (^{\alpha(g)}(e_N\boxdot h)) =\\
   (^{\alpha(g)}(\overline{\eta}^{\beta(h)})) \star (\overline{\eta}^{\beta(h)}) \star 
(\overline{ g \odot   e_N}) \star (g \odot (e_N \boxdot h)) \star (\overline{e_N \boxdot h})  =\\
(^{\alpha(g)}(\overline{\eta}^{\beta(h)})) \star (\overline{\eta}^{\beta(h)}) \star 
\overline{ \eta} \star (\eta \boxdot h) \star (\overline{e_N \boxdot h})  = (^{\alpha(g)}(\overline{\eta}^{\beta(h)})) \star (\overline{\eta}^{\beta(h)}) 
\star ( \eta^{\beta(h)}) .
\end{multline*} 
By ($b$) we have  
$$\overline{\eta}^{\beta(h)} = \overline{\eta^{\beta(h)}}$$
and this implies  (2).
\end{proof}

\begin{thm}\label{thm36}  Let $(G, \cdot ,N, \star   ,\odot )$ be a left bracoid and $(H, \circ )$ be a group. Assume that 
\begin{enumerate}[\rm(a)]
\item $\boxdot $ is a transitive right action of $(H, \circ )$ on $N$  such that
\begin{equation*} 
  g \odot  ( \eta \boxdot  h) = (g \odot   \eta) \boxdot  h  
\end{equation*}
for all $g\in G, h \in H,$ $\eta \in N$, 
\item $^{\alpha(g)}(\eta^{\beta(h)}) = (^{\alpha(g)}\eta )^{\beta(h)}$ for all $g\in G, h\in H$ and $\eta \in N$.
\end{enumerate}
Then $(G, \cdot ,\odot , H, \circ ,\boxdot , N, \star )$ is a two-sided  bracoid.
\end{thm}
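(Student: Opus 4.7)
Since $(N,\star)$ is abelian and assumption (a) already supplies both the transitivity of $\boxdot$ and the compatibility (\ref{eq6}), the theorem reduces to checking the right-bracoid distributive law (\ref{eq4}) for $(H,\circ,N,\star,\boxdot)$. My first step is to substitute $\nu\boxdot h=\nu\star\nu^{\beta(h)}\star(e_N\boxdot h)$ into both sides of (\ref{eq4}) and cancel in the abelian group; this rewrites (\ref{eq4}) as
\[
(\eta\star\mu)^{\beta(h)} = \eta^{\beta(h)}\star\mu^{\beta(h)} \qquad (\eta,\mu\in N,\;h\in H),
\]
i.e.\ as the assertion that $\beta(h)$ is an endomorphism of $(N,\star)$ for every $h$.

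To establish this additivity I will fix $h\in H$ and $\eta,\mu\in N$, pick $g\in G$ with $g\odot e_N=\eta$ by transitivity of $\odot$, and set $\xi=(^{\gamma(g)})^{-1}\mu$, so that $g\odot\xi=\eta\star\mu$ and $^{\gamma(g)}\xi=\mu$. Proposition \ref{p34} then yields
\[
(\eta\star\mu)^{\beta(h)} = {^{\alpha(g)}(\xi^{\beta(h)})}\star\xi^{\beta(h)}\star{^{\alpha(g)}(e_N\boxdot h)}.
\]
A short computation from the compatibility (a) gives $^{\alpha(g)}(e_N\boxdot h)=\overline{\eta}\star(\eta\boxdot h)\star\overline{e_N\boxdot h}=\eta^{\beta(h)}$, and assumption (b) rewrites $^{\alpha(g)}(\xi^{\beta(h)})$ as $(^{\alpha(g)}\xi)^{\beta(h)}$. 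The problem then reduces to the single identity $(^{\alpha(g)}\xi)^{\beta(h)}\star\xi^{\beta(h)}=\mu^{\beta(h)}$---equivalently, to the commutation $^{\gamma(g)}(\xi^{\beta(h)})=(^{\gamma(g)}\xi)^{\beta(h)}$ between the automorphism $\gamma(g)$ and the map $\beta(h)$ applied to $\xi$.

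To close this last gap, I plan to apply Proposition \ref{p34} a second time at a complementary element---for instance at $g'\in G$ with $g'\odot e_N=\mu$---producing a second expression for $(\eta\star\mu)^{\beta(h)}$, and then cross-compare the two identities. Proposition \ref{p35}(1), $\overline{\nu}^{\beta(h)}=\overline{\nu^{\beta(h)}}$, already derived from (b), should cancel the extraneous terms and leave only the required additivity. This mirrors the cancellation technique used to prove Proposition \ref{p35} itself, where two computations of $(^{\alpha(g)}\overline{\eta})^{\beta(h)}$ are matched and the inversion property is extracted. The principal obstacle is precisely this final cross-comparison: turning the associativity-type commutation of (b), together with the inversion property of Proposition \ref{p35}, into genuine additivity of $\beta(h)$ on arbitrary pairs in $N$. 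Once completed, (\ref{eq4}) holds and $(G,\cdot,\odot,H,\circ,\boxdot,N,\star)$ is a two-sided bracoid.
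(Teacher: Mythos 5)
Your opening reduction is correct and clean: writing $\nu\boxdot h=\nu\star\nu^{\beta(h)}\star(e_N\boxdot h)$ and using commutativity of $\star$, the right-bracoid law (\ref{eq4}) is indeed equivalent to the additivity $(\eta\star\mu)^{\beta(h)}=\eta^{\beta(h)}\star\mu^{\beta(h)}$, and your application of Proposition \ref{p34} with $g\odot e_N=\eta$, $\xi=(\gamma(g))^{-1}\mu$, together with the identification $^{\alpha(g)}(e_N\boxdot h)=\eta^{\beta(h)}$, is also correct. But the argument then stops exactly where the real work begins. What remains is $(^{\alpha(g)}\xi)^{\beta(h)}\star\xi^{\beta(h)}=\mu^{\beta(h)}$, equivalently $^{\gamma(g)}(\xi^{\beta(h)})=(^{\gamma(g)}\xi)^{\beta(h)}$, and this is not a small residue: it is precisely the commutation of $\gamma(g)$ with $\beta(h)$, which is part (3) of Theorem \ref{thm32} and in the paper is a \emph{consequence} of the two-sided structure, not an available ingredient. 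Unwinding it via $^{\gamma(g)}\nu={}^{\alpha(g)}\nu\star\nu$ and hypothesis (b) just returns you to an instance of the additivity of $\beta(h)$ you are trying to prove, so the reduction is circular. The proposed repair --- a second application of Proposition \ref{p34} at $g'$ with $g'\odot e_N=\mu$ and a cross-comparison --- does not close the gap: it yields a single relation $A\star\eta^{\beta(h)}=B\star\mu^{\beta(h)}$ between the two unknown defect terms $A=(^{\alpha(g)}\xi)^{\beta(h)}\star\xi^{\beta(h)}$ and $B=(^{\alpha(g')}\xi')^{\beta(h)}\star\xi'^{\beta(h)}$, which is consistent with $A=\mu^{\beta(h)}$, $B=\eta^{\beta(h)}$ but cannot determine both. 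You acknowledge this yourself ("the principal obstacle is precisely this final cross-comparison"), so the proposal is an incomplete plan rather than a proof.

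The paper avoids this trap by parametrising the second summand differently. It expands both sides of hypothesis (b) completely in terms of $\odot$ and $\boxdot$, cancels using Lemma \ref{lem23} and Proposition \ref{p35}(2), and recognises via the left-bracoid law (\ref{eq2}) that the term $\overline{(g\odot e_N)}\star(g\odot\eta)\star\overline{\eta}$ equals $g\odot(\eta\star(g^{-1}\odot\overline{\eta}))$. Applying $g^{-1}\odot$ to both sides then strips away every occurrence of the $g$-action and leaves
\[
(\eta\star(g^{-1}\odot\overline{\eta}))\boxdot h=(\eta\boxdot h)\star\overline{(e_N\boxdot h)}\star((g^{-1}\odot\overline{\eta})\boxdot h),
\]
after which transitivity of $\odot$ in the form ``for every $\mu$ there is $g$ with $g^{-1}\odot\overline{\eta}=\mu$'' gives the law for arbitrary pairs. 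The essential point your approach misses is that the free parameter must sit in the position $g^{-1}\odot\overline{\eta}$ (so that the $g$-action can be removed by acting with $g^{-1}$), whereas your choice $\xi=(\gamma(g))^{-1}\mu$ leaves a residual $\gamma(g)$ twist entangled with $\beta(h)$ that cannot be undone without already knowing the conclusion. If you want to salvage your outline, you should replace the final cross-comparison by this expansion-and-untwisting step.
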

\begin{proof}  Let $g\in G, h\in H$ and $\eta\in N$. We have 
\begin{multline*}  
^{\alpha(g)}(\eta^{\beta(h)}) = ^{\alpha(g)}(\overline{\eta} \star (\eta \boxdot h) \star \overline{(e_N \boxdot h)})  =\\
\overline{(g\odot e_N)} \star (g \odot (\overline{\eta} \star (\eta \boxdot h) \star \overline{(e_N \boxdot h)})) \star 
\overline{(\overline{\eta} \star (\eta \boxdot h) \star \overline{(e_N \boxdot h)})}=\\
\overline{(g\odot e_N)} \star (g \odot (\overline{\eta} \star (\eta \boxdot h) \star \overline{(e_N \boxdot h)})) \star 
 \eta \star \overline{(\eta \boxdot h)} \star (e_N \boxdot h)
\end{multline*} 
and
\begin{multline*}  
(^{\alpha(g)}\eta)^{\beta(h)} = (\overline{(g \odot  e_N)} \star (g \odot \eta) \star  \overline{\eta})^{\beta(h)} = \\
\overline{(\overline{(g \odot  e_N)} \star (g \odot \eta) \star  \overline{\eta})} \star ((\overline{(g \odot  e_N)} \star (g \odot \eta) \star  \overline{\eta}) \boxdot h) \star 
\overline{(e_N \boxdot h)} = \\
(g \odot  e_N) \star \overline{(g \odot \eta )} \star  \eta \star ((\overline{(g \odot  e_N)} \star (g \odot \eta) \star  \overline{\eta}) \boxdot h) \star 
\overline{(e_N \boxdot h)} 
\end{multline*}
By ($b$) it follows that
\begin{multline*}
((\overline{(g \odot  e_N)} \star (g \odot \eta) \star  \overline{\eta}) \boxdot h) \star \overline{(g \odot  \eta )} \star (g \odot  e_N)^2 = \\
  (g \odot (\overline{\eta} \star (\eta \boxdot h) \star \overline{(e_N \boxdot h)})) \star 
\overline{(\eta\boxdot h)} \star (e_N \boxdot h)^2.
\end{multline*}
Hence by Lemma \ref{lem23} and Proposition \ref{p35}(2) it follows that
\begin{multline*}
((\overline{(g \odot  e_N)} \star (g \odot \eta) \star  \overline{\eta}) \boxdot h) \star (g \odot  \overline{\eta }) = \\
  (g \odot (\overline{\eta} \star (\eta \boxdot h) \star \overline{(e_N \boxdot h)})) \star (\overline{\eta } \boxdot h).
\end{multline*}

Thus by (\ref{eq2})
\begin{multline*}
((g \odot  (\eta \star (g^{-1} \odot \overline{\eta}) ))\boxdot h) \star (g \odot  \overline{\eta }) = \\
  (g \odot (\overline{\eta} \star (\eta \boxdot h) \star \overline{(e_N \boxdot h)})) \star (\overline{\eta } \boxdot h)
\end{multline*}
and by ($a$) we have
\begin{multline*}
g^{-1} \odot ((g \odot  ((\eta \star (g^{-1} \odot \overline{\eta}) )\boxdot h)) \star (g \odot  \overline{\eta })) = \\
g^{-1} \odot ( (g \odot (\overline{\eta} \star (\eta \boxdot h) \star \overline{(e_N \boxdot h)}))) \star 
(\overline{\eta } \boxdot h)).
\end{multline*}
By applying the action $\odot $ of the group $(G, \cdot )$ on $(N,\star )$ we have 
\begin{multline*}
((\eta \star (g^{-1} \odot \overline{\eta}) )\boxdot h) \star \overline{  \eta } \star \overline{(g^{-1} \odot e_N)} = \\
  \overline{\eta} \star (\eta \boxdot h) \star \overline{(e_N \boxdot h)} \star (g^{-1} \odot (\overline{\eta } \boxdot h)) \star \overline{(g^{-1} \odot e_N)}.
\end{multline*}
Hence by ($a$) we obtain
\begin{equation*}
  (\eta \star  (g^{-1} \odot \overline{\eta})) \boxdot h = 
   (\eta \boxdot h) \star \overline{(e_N \boxdot h)} \star ((g^{-1} \odot \overline{\eta}) \boxdot h ). 
\end{equation*}
Since $\odot $ is a transitive action of $(G, \cdot )$ on $(N, \star )$ for every $\mu \in N$ there exists $g \in G$ such that 
$g^{-1} \odot \overline{\eta} = \mu.$ Therefore  for every $\mu, \eta \in N$ and $h\in H$ we have 
\begin{equation*} 
  (\eta \star   \mu) \boxdot  h = (\eta \boxdot h) \star   \overline{(  e_N \boxdot h)}  \star  (\mu \boxdot h). 
\end{equation*}

\end{proof}

When a two-sided brace $(B,\star ,\cdot)$ is viewed as a left bracoid we have $$a^{\beta (b)} = a\ast b = {^{\alpha (a)}b}$$ for all $a, b \in B$.
Hence Theorem \ref{thm36} is a generalisation of the result of Lau.

\section*{Acknowledgements} 

This research is partially supported by the National Science Centre, Poland, grant no. 2019/35/B/ST1/01115.


\begin{thebibliography}{99}
\bibitem{BT19} Brzezi\'{n}ski, Tomasz Trusses: between braces and rings.
Trans. Amer. Math. Soc.372(2019), no.6, 4149--4176.
\bibitem{CGS18} Ferran Ced\'{o}, T. Gateva-Ivanova, and A Smoktunowicz. Braces and symmetric groups with special conditions. Journal of Pure and
 Applied Algebra, 222(12) (2018) 3877--3890.
 \bibitem{CJO14} Ced\'{o}, Ferran; Jespers, Eric; Okni\'{n}ski, Jan
Braces and the Yang-Baxter equation.
Comm. Math. Phys.327(2014), no.1, 101--116.
\bibitem{CSV19} F. Ced\'{o}, A. Smoktunowicz and L. Vendramin, Skew left braces of nilpotent type, Proc. London Math. Soc. (3) 118 (2019) 1367--1392.
\bibitem{GV17} Guarnieri, L.; Vendramin, L.
Skew braces and the Yang-Baxter equation.
Math. Comp.86(2017), no.307, 2519--2534.
\bibitem{DR24} Doikou, Anastasia; Rybo\l owicz, Bernard
Near braces and p-deformed braided groups.
Bull. Lond. Math. Soc.56(2024), no.1, 124--139.
\bibitem{L} I. Lau, An associative left brace is a ring, J. Algebra and Applications vol 19 No 09, 2050179 (2020).
\bibitem{MT24} I. Martin-Lyons, P.J.Truman, Skew bracoids, Journal of Algebra 638 (2024) 751--787.
\bibitem{WR07} W. Rump. Braces, radical rings, and the quantum Yang-Baxter equation, Journal of Algebra 307(1) (2007)153 -- 170.


\end{thebibliography}
\end{document}